\documentclass[reqno, 12pt]{article}

\pdfoutput=1

\usepackage{enumerate}
\usepackage{latexsym}
\usepackage[centertags]{amsmath}
\usepackage{amsfonts}
\usepackage{amssymb}
\usepackage{amsthm}
\usepackage{newlfont}
\usepackage{graphics}
\usepackage{color}
\usepackage{float}
\usepackage{diagbox}
\usepackage{tocloft}
\usepackage{titlesec}
\usepackage{booktabs}
\usepackage[pagebackref]{hyperref}
\usepackage[linesnumbered,ruled,vlined]{algorithm2e}
\usepackage{url} 
\usepackage{hyperref}
\usepackage{longtable}
\usepackage{rotating}
\usepackage{multirow}
\usepackage{extarrows}
\usepackage[sort,compress,numbers]{natbib}
\usepackage[utf8]{inputenc}

\newtheorem{thm}{Theorem}[section]

\newtheorem{prop}[thm]{Proposition}

\theoremstyle{mydefinition}
\newtheorem{dfn}[thm]{Definition}
\theoremstyle{myremark}

\allowdisplaybreaks[4]

\SetKwInput{KwInput}{Input}                % Set the Input
\SetKwInput{KwOutput}{Output}              % set the Output

\title{Proofs of four generating function conjectures for arbor polytopes}

\author{Feihu Liu$^{\color{blue} \dag}$ and Jinlong Tang$^{\color{blue} \S}$
\\[2mm]
{\small $^{\color{blue} \dag, \S}$ School of Mathematical Sciences,}\\[-0.8ex]
{\small Capital Normal University, Beijing, 100048, P.R.~China}\\
{\small {\color{blue} $^\dag$} Email address: liufeihu7476@163.com}\\
{\small {\color{blue} $^\S$} Email address: jinlong\_tang@cnu.edu.cn}\\
}

\date{\today}

\begin{document}

\maketitle

\begin{abstract}
This paper proves four conjectured generating series, due to Chapoton, which concern invariants of posets and polytopes associated with a specific sequence of arbors. Two of these conjectures provide closed-form formulas for the generating series of the Zeta polynomial and the generating series of the M-triangle of the poset, respectively. The remaining two conjectures pertain, respectively, to the Ehrhart polynomial and the Laplace transform of the volume function of the associated arbor polytope.
\end{abstract}

\noindent
\begin{small}
\emph{2020 Mathematics subject classification}: Primary 05A15;  Secondary 52B20; 52B05.
\end{small}

\noindent
\begin{small}
\emph{Keywords}: Lattice polytope; Arbor polytope; Zeta polynomial; Ehrhart polynomial; Generating function.
\end{small}

%\tableofcontents

\section{Introduction}

The combinatorial framework of \emph{arbors} was recently introduced to link rooted trees with geometric and algebraic structures. Arbor polytopes were first introduced and studied by Chapoton \cite{Chapoton}. Subsequently, the combinatorial properties of two special classes of arbor polytopes (i.e., the composition polytopes \cite{Athanasiadis-Ann} and the polytopes $Q_{n,k}$ \cite{Athanasiadis-Xiao-Yan}) were investigated.

An \emph{arbor} $t$ is defined as a rooted tree where each vertex $v$ is decorated by a non-empty subset of a finite set $[n]:=\{1,2,\ldots,n\}$, and these subsets form a partition of $[n]$. We call $n$ the \emph{size} of the arbor. The cardinality of a vertex $v$ is denoted by $|v|$.
For example, Figure \ref{Arbor1} is an arbor on the set $[8]$. 
\begin{figure}[htp]
\centering
\includegraphics[width=0.3\linewidth]{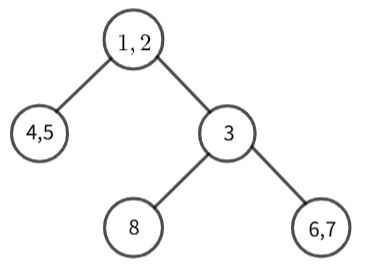}
\caption{An arbor on the set $[8]$.}
\label{Arbor1}
\end{figure}

To every arbor $t$ on a set $[n]$, we associate a lattice polytope $Q_t$ and a partial order $P_t$.

\begin{dfn}[The Polytope $Q_t$]
The polytope $Q_t$ is defined in the vector space $\mathbb{R}^n$ with coordinates $(x_i)_{i \in [n]}$ by the inequalities $x_i \ge 0$ for all $i \in [n]$ and 
\begin{align*}
\sum_{i \in \mathcal{D}(v)} x_i \le |\mathcal{D}(v)|
\end{align*}
for every vertex $v$, where $\mathcal{D}(v)$ is the set of labels in the sub-tree rooted at $v$.
\end{dfn}

For the example in Figure \ref{Arbor1}, besides all $x_i\geq 0$, the defining inequalities are 
$$x_8\leq 1, \quad x_6+x_7\leq 2, \quad x_3+x_6+x_7+x_8\leq 4, \quad x_4+x_5\leq 2,\quad x_1+x_2+\cdots +x_8\leq 8.$$

A convex polytope $\mathcal{P}$ is called \emph{lattice polytope} if all of its vertices have integer coordinates.
Chapoton \cite[Lemma 1.2]{Chapoton} proved that the arbor polytope is a simple lattice polytope.

\begin{dfn}[The Poset $P_t$]
The poset $P_t$ consists of the lattice points of $Q_t$ ordered by coordinate-wise comparison: $x \le y$ if $x_i \le y_i$ for all $i \in [n]$. This poset is graded by the height function $\mathrm{ht}(x) = \sum_{i \in [n]} x_i$.
\end{dfn}

The reader is referred to reference \cite[Section 3]{RP.Stanley} for the necessary knowledge on posets.
A standard reference on the polytope is \cite{BeckRobins}.
We are concerned with several classical invariants:
\begin{itemize}
    \item \textbf{Ehrhart Polynomial} $E_t(u)$: Counts the number of lattice points in the dilated polytope $uQ_t$.
    \item \textbf{Zeta Polynomial} $Z_t(u)$: Counts chains of length $u-2$ in the poset $P_t$.
    \item \textbf{M-triangle} $M_t(X, Y)$: A polynomial defined using M\"obius numbers and the rank function.
    \item \textbf{Laplace Transform of Volume} $L_t(v)$: Encodes the volume of slices of $Q_t$ according to height.
\end{itemize}

In this work, we focus on the specific arbors sequence $t_n$ ($n \ge 1$), which consists of a root vertex with one element, supporting $n-1$ child vertices that are each leaves with one element. Figure \ref{Arbor2} is the example of $t_5$. This sequence is closely related to Hochschild polytopes \cite{Combe21,Rivera-Saneb}.
The arbor polytope $Q_{t_n}$ is also a special case of a class of polytopes studied in the literature \cite{Athanasiadis-Xiao-Yan}.

\begin{figure}[htp]
\centering
\includegraphics[width=0.35\linewidth]{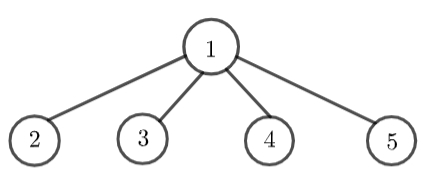}
\caption{The example of $t_5$.}
\label{Arbor2}
\end{figure}

Using \texttt{FriCAS} \cite{FriCAS}, Chapoton \cite[Section 9]{Chapoton} conjectured four formulas for various generating series in $s$ for the arbors $t_n$.
The four formulas respectively involve 
\begin{itemize}
    \item the Zeta polynomials of the posets $P_{t_n}$ (see Theorem \ref{Arbor-Theorem-1}),
    \item the M-triangles of the posets $P_{t_n}$ (see Theorem \ref{Arbor-Theorem-2}),
    \item the Ehrhart polynomials of the polytopes $Q_{t_n}$ (see Theorem \ref{Arbor-Theorem-3}),
    \item the Laplace transform of volume functions of the polyropes $Q_{t_n}$ (see Theorem \ref{Arbor-Theorem-4}).
\end{itemize}
In this paper, we mainly prove the four conjectures proposed by Chapoton, namely, the four theorems mentioned above.

The paper is organized as follows. Section \ref{Section-2-poset} introduces basic definitions on the Zeta polynomial and M-triangle of a poset. We then establish Theorems \ref{Arbor-Theorem-1} and \ref{Arbor-Theorem-2}.
In Section \ref{Section-3-polytopes}, we introduce the Ehrhart polynomial and the volume function of a lattice polytope. We then prove Theorems \ref{Arbor-Theorem-3} and \ref{Arbor-Theorem-4}.

\section{Generating Functions for Poset Invariants}\label{Section-2-poset}

\subsection{Zeta Polynomials}

Let $P$ be a finite poset. If $u \geq 2$, then define $Z_P(u)$ to be the number of chains $e_1 \leq e_2 \leq \cdots \leq e_{u-1}$ in $P$. We call $Z_P(u)$ (regarded as a function of $u$) the \emph{Zeta polynomial} of $P$. 
For the relevant properties of the Zeta polynomial, we refer the reader to \cite[Section 3.12]{RP.Stanley}

For an arbor $t$, Chapoton extends the Zeta polynomial of the poset $P_t$.
Let $Z_t(u,X)$ be the polynomial in $u$ and $X$ such that for every integer $m\geq 2$,
\begin{align*}
Z_t(m,X)=\sum_{e_1 \leq e_2 \leq \cdots \leq e_{m-1}} X^{\mathrm{ht}(e_{m-1})},
\end{align*}
where $\mathrm{ht}$ is the height of an element.
Setting $X=1$ yields the classical Zeta polynomial of the poset $P_t$. 

Let $t$ be an arbor on a set of cardinality $n\geq 2$. Let $(t^{(1)},\ldots,t^{(w)})$ be the sub-trees of the root vertex of $t$. Let $r$ be the size of the root vertex of $t$. 
Chapoton \cite{Chapoton} derived the following proposition.

\begin{prop}{\em \cite[Proposition 2.7]{Chapoton}}\label{Chap2.7}
Let $Z_{W}=\prod\limits_{k=1}^{w}Z_{t^{(k)}}(u,X)$.
Write $Z_{W}=\sum\limits_{j}W_{j}X^{j}$. Then for $0\le j\le n$, the coefficient of $X^{j}$ in $Z_{t}(u,X)$ is
$$ \sum_{l=\max(0,j-n+r)}^{j}\binom{r(u-1)+l-1}{l}W_{j-l}.$$
\end{prop}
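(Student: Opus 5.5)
The plan is to prove the formula directly from the definitions. I would fix an integer $m\ge 2$, show that both sides agree at $u=m$, and then conclude by polynomiality in $u$. Throughout, write $R\subseteq[n]$ for the label set of the root vertex, so $|R|=r$. Write $D_k$ for the label set of the subtree $t^{(k)}$, so that $[n]=R\sqcup D_1\sqcup\cdots\sqcup D_w$ and $\sum_k |D_k|=n-r$.

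\emph{Step 1: splitting $Q_t$ as a product.} The vertices of $t$ are the root together with the vertices of the $t^{(k)}$. For a non-root vertex $v$ in $t^{(k)}$, the set $\mathcal{D}(v)$ is the same whether it is computed in $t$ or in $t^{(k)}$. Hence $x\in\mathbb{Z}^n$ is a lattice point of $Q_t$ exactly when three conditions hold:
\begin{itemize}
\item $x|_{R}\in\mathbb{N}^R$;
\item $x|_{D_k}$ is a lattice point of $Q_{t^{(k)}}$ for every $k$;
\item $\sum_{i\in[n]}x_i\le n$.
\end{itemize}
The order is coordinatewise, so a multichain $e_1\le\cdots\le e_{m-1}$ in $P_t$ is the same as the following data: a multichain in $\mathbb{N}^R$, together with a multichain in each $P_{t^{(k)}}$, with the single global condition $\mathrm{ht}(e_{m-1})\le n$. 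This condition is checked only on the top element, because heights increase along the chain.

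\emph{Step 2: counting by height.} Suppose the root part of $e_{m-1}$ has height $l$, and the subtree parts of $e_{m-1}$ have heights summing to $j-l$. Then the total height is $j$.

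For the subtree parts, multiplying the $Z_{t^{(k)}}(m,X)$ shows that the weighted number of choices is exactly the coefficient $W_{j-l}$ of $Z_W$ at $u=m$.

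For the root part, I count multichains $a_1\le\cdots\le a_{m-1}$ in $\mathbb{N}^r$ whose top element has coordinate sum $l$. Such a chain corresponds bijectively to its increments $a_1,\ a_2-a_1,\ \dots,\ a_{m-1}-a_{m-2}$. These are $m-1$ vectors in $\mathbb{N}^r$ whose total coordinate sum is $l$, that is, $r(m-1)$ nonnegative integers summing to $l$. Their number is $\binom{r(m-1)+l-1}{l}$.

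\emph{Step 3: summing.} For $j\le n$, summing the products from Step 2 over $l$ gives the coefficient of $X^j$ in $Z_t(m,X)$. The range of $l$ is handled as follows:
\begin{itemize}
\item $l\le j$ is forced, since the subtree heights are nonnegative.
\item Each $x|_{D_k}$ has height at most $|D_k|$, because the root of $t^{(k)}$ imposes this constraint. Hence $W_{j-l}=0$ whenever $j-l>n-r$, so restricting to $l\ge\max(0,j-n+r)$ drops only zero terms.
\item The global condition $\mathrm{ht}\le n$ is exactly the restriction $j\le n$.
\end{itemize}

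\emph{Step 4: polynomiality.} Both sides are polynomials in $u$, and they agree at every integer $m\ge2$, so they are equal as polynomials. I expect the only care point of the whole argument to be the bookkeeping in Step 1: checking that the root inequality is the sole constraint coupling the root coordinates to the subtrees.
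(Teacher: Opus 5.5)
Your proof is correct and complete. The paper gives no proof of this proposition (it is quoted from Chapoton), so there is no argument to compare it against line by line.

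Your argument is the natural direct one:
\begin{itemize}
\item $P_t$ is the height-$\le n$ truncation of $\mathbb{N}^r\times\prod_k P_{t^{(k)}}$.
\item The root multichains of length $m-1$ are counted by stars and bars, equivalently by the coefficient of $X^l$ in $(1-X)^{-r(m-1)}$.
\item $W_{j-l}$ vanishes for $j-l>n-r$ because each $P_{t^{(k)}}$ has height at most $|D_k|$.
\item The identity lifts from the integers $m\ge 2$ to polynomials in $u$.
\end{itemize}

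One small bonus is worth recording. The right-hand side is visibly polynomial in $u$ once the $Z_{t^{(k)}}$ are, so your argument also proves, by induction on the tree, that the polynomial $Z_t(u,X)$ presupposed by the definition actually exists, rather than merely assuming it.
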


The first equality in the following theorem was conjectured by Chapoton.
In fact, we obtain a simple closed formula during the proof.
\begin{thm}\label{Arbor-Theorem-1}
The generating series for the Zeta polynomials of $P_{t_n}$ is 
\[ 1 + \sum_{n \ge 1} Z_{t_n}(u, 1) s^n= \exp\left( \int \frac{u}{(1-us)(1+s-us)} \,ds \right)=\left( \frac{1-su+s}{1-su} \right)^u. \]
\end{thm}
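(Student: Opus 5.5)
We first compute $Z_{t_n}(u,X)$ explicitly from Proposition \ref{Chap2.7}, then sum the generating series in closed form, and finally check the exponential-integral form by differentiation. Throughout, we may take $u=m\ge 2$ an integer. Both sides are then power series in $s$ whose coefficients are polynomials in $u$, so the identity extends to all $u$.

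\emph{Base case.} The arbor $t_1$ is a single vertex of size $1$, so $P_{t_1}=\{0,1\}$ is a two-element chain. A multichain $e_1\le\cdots\le e_{u-1}$ either consists only of $0$, contributing $X^0$, or has top element $1$. In the second case it is determined by the position where it first reaches $1$, giving $u-1$ choices. Hence $Z_{t_1}(u,X)=1+(u-1)X$, and in particular $Z_{t_1}(u,1)=u$.

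\emph{Coefficients for $n\ge 2$.} For $t_n$ the root has size $r=1$ and there are $w=n-1$ subtrees, each a copy of $t_1$. Therefore
\[Z_W=(1+(u-1)X)^{n-1}, \qquad W_k=\binom{n-1}{k}(u-1)^k.\]
Proposition \ref{Chap2.7} then gives the coefficient of $X^j$ as $\sum_l \binom{u-2+l}{l}W_{j-l}$. The lower bound $l\ge j-n+1$ is automatic, since $W_{j-l}=0$ when $j-l>n-1$. Setting $X=1$ and writing $k=j-l$, we get
\[Z_{t_n}(u,1)=\sum_{k=0}^{n-1}\binom{n-1}{k}(u-1)^k\sum_{l=0}^{n-k}\binom{u-2+l}{l}.\]
The hockey-stick identity collapses the inner sum to $\binom{u-1+n-k}{n-k}$, so
\[Z_{t_n}(u,1)=\sum_{k=0}^{n-1}\binom{n-1}{k}(u-1)^k\binom{u-1+n-k}{n-k}.\]
For $n=1$ this gives $u$, which agrees with the base case, so the formula is valid for all $n\ge1$.

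\emph{Summing the series.} Substitute $n-1=k+m$ with $m\ge 0$, so that $\binom{n-1}{k}=\binom{k+m}{k}$ and $\binom{u-1+n-k}{n-k}=\binom{u+m}{m+1}$. The $k$-sum is
\[\sum_{k\ge0}\binom{k+m}{k}\bigl((u-1)s\bigr)^k=\bigl(1-(u-1)s\bigr)^{-m-1}.\]
Hence, with $y=s/(1-(u-1)s)$,
\[\sum_{n\ge1}Z_{t_n}(u,1)s^n=\sum_{m\ge0}\binom{u+m}{m+1}y^{m+1}=(1-y)^{-u}-1.\]
Since
\[1-y=\frac{1-us}{1-us+s},\]
we obtain $1+\sum_{n\ge1}Z_{t_n}(u,1)s^n=\bigl((1-su+s)/(1-su)\bigr)^u$, which is the second equality of Theorem \ref{Arbor-Theorem-1}.

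\emph{Exponential form.} It remains to take the logarithmic derivative of $F(s)=\bigl((1-su+s)/(1-su)\bigr)^u$. We have
\[\frac{F'(s)}{F(s)}=u\left(\frac{1-u}{1+s-us}+\frac{u}{1-us}\right).\]
Over the common denominator, the numerator is
\[(1-u)(1-us)+u(1+s-us)=1.\]
So $F'/F=u/\bigl((1-us)(1+s-us)\bigr)$. Taking the antiderivative with value $0$ at $s=0$ matches $F(0)=1$, and this gives the first equality.

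\emph{Main obstacle.} The only delicate point is bookkeeping the summation range in Proposition \ref{Chap2.7}, in particular confirming that the bound $l\ge j-n+r$ imposes no extra truncation. Everything after that is a routine application of the hockey-stick identity and negative-binomial series.
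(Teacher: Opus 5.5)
Your proposal is correct and follows essentially the same route as the paper. Both arguments apply Proposition \ref{Chap2.7} with $W_k=\binom{n-1}{k}(u-1)^k$, collapse the inner sum to $\binom{u+n-k-1}{n-k}$ by the hockey-stick identity, substitute $n-1=k+m$ and use the negative-binomial series, and finish with a logarithmic derivative. You also check the case $n=1$ separately, which the paper leaves implicit even though Proposition \ref{Chap2.7} is stated only for $n\ge 2$, and you verify explicitly that the numerator $(1-u)(1-us)+u(1+s-us)$ equals $1$.
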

\begin{proof}
For the arbor $t$ on one element, it is clear that the Zeta polynomial is $1+(u-1)X$.
Therefore, for $n-1$ leaves, we obtain
\begin{align*}
Z_W= (1+(u-1)X)^{n-1} = \sum_{k=0}^{n-1} W_k X^k, \quad \text{where} \quad W_k = \binom{n-1}{k}(u-1)^k.
\end{align*}
By Proposition \ref{Chap2.7}, for $0\leq j\leq n$, the coefficient of $X^{j}$ in $Z_{t_{n}}(u,X)$ is 
$$\sum\limits_{l=\max(0,j-n+1)}^{j}\binom{u-2+l}{l} W_{j-l}=\sum\limits_{l=0}^{min(j,n-1)}\binom{u-2+j-l}{j-l} W_{l}=\sum\limits_{l=0}^{j}\binom{u-2+j-l}{j-l} W_{l},$$
with the convention that $W_l= 0$ if $l\notin \{0,1,\ldots,n-1\}$.
Summing over all possible heights $0 \le j \le n$, we get
$$Z_{t_n}(u, 1) =\sum_{j=0}^n \sum\limits_{l=0}^{j}\binom{u-2+j-l}{j-l} W_{l}.$$
We extract the coefficients of $W_k$ ($0\leq k\leq n-1$) and obtain
$$\sum_{j=k}^n \binom{u+2+j-k}{j-k}=\sum_{l=0}^{n-k} \binom{u-2+l}{l}=\binom{u+n-k-1}{n-k}.$$
Therefore, we have
\begin{align*}
Z_{t_n}(u, 1)  &=\sum_{k=0}^{n-1} W_{k}\binom{u+n-k-1}{n-k}\\
&= \sum_{k=0}^{n-1} \binom{n-1}{k} (u-1)^k \binom{u+n-k-1}{n-k}\\
&=\sum_{k=0}^{\infty} \binom{n-1}{k} (u-1)^k \binom{u+n-k-1}{n-k}.
\end{align*}
Let $G(s) = 1 + \sum\limits_{n\geq1} Z_{t_n}(u, 1) s^n$. By making the substitution $m = n - k - 1$, and given that $n \geq 1$ and $k \geq 0$ and $\binom{n-1}{k}=0$ if $n-k-1<0$, the summation range for $m$ can be restricted to $[0,\infty)$ without altering the final sum. We get
\begin{align*}
G(s) &= 1+ \sum_{n\geq1} \sum_{k\geq0} \binom{n-1}{k} (u-1)^k \binom{u+n-k-1}{n-k} s^n\\
&= 1 + \sum_{m\geq0}\sum_{k\geq0} \binom{k+m}{k} (u-1)^k \binom{u+m}{m+1} s^{k+m+1} \\
&= 1 + \sum_{m\geq0} s^{m+1} \binom{u+m}{m+1} \left[ \sum_{k\geq0} \binom{k+m}{k} (s(u-1))^k \right] \\
&= 1 + \sum_{m\geq0} \binom{u+m}{m+1} \frac{s^{m+1}}{(1-su+s)^{m+1}}=\sum_{m\geq0} \binom{u+m-1}{m}\frac{s^m}{(1-su+s)^m}\\
&=\left(\frac{1}{1-\frac{s}{1-su+s}} \right)^u=\left( \frac{1-su+s}{1-su} \right)^u.
\end{align*}
Taking the natural logarithm and differentiating with respect to $s$:
\begin{align*}
\frac{d}{ds} \ln G(s) &= u \frac{d}{ds} \Big( \ln(1-us+s) - \ln(1-us) \Big)= \frac{u}{(1-us)(1+s-us)}.
\end{align*}
Integrating both sides and exponentiating (with the constant $C=0$ since $G(0)=1$) yields the desired result.
\end{proof}

\subsection{M-triangles}

Given a poset $P$, the \emph{M\"obius function} of $P$ (denoted $\mu_P$) is defined by 
\begin{align*}
\mu_P(a,a)&=1,\quad \text{for all} \quad a\in P;
\\ \mu_P(a,b)&=-\sum_{a\leq e<b} \mu_P(a,e), \quad \text{for all} \quad a<b \quad\text{in}\quad P.
\end{align*}
Given a graded poset $P$, one can define the \emph{$M$-triangle} of $P$ as the following sum of M\"obius numbers weighted using the rank function $\mathrm{ht}$ 
\begin{align*}
M_P(X, Y) = \sum_{a \leq b} \mu_P(a, b) X^{\mathrm{ht}(a)} Y^{\mathrm{ht}(b)}.
\end{align*}

For an arbor $t$, in order to obtain $M_{P_t}(X,Y)$, Chapoton introduced the polynomial
$$K_t(X,Y)=\sum_{a\in P_t}X^{\mathrm{nz}(a)}Y^{\mathrm{ht}(a)},$$ 
where $\mathrm{nz}(a)$ is the number of non-zero coordinates in $a$.
For the arbor $t$ on one element, it is clear that $K_t(X,Y)= 1 + XY$. 
Let $t$ be an arbor on $[n]$ with $n \geq 2$. Let $(t^{(1)},\ldots,t^{(w)})$ be the sub-trees of the root vertex of $t$. Let $r$ be the size of the root vertex of $t$. Chapoton obtained the following characterization of the coefficients of the polynomial $K_t(X,Y)$.

\begin{prop}{\em \cite[Proposition 2.8]{Chapoton}}\label{Chap2.8}
Let $W(X,Y)= \prod_{k=1}^w K_{t^{(k)}}(X,Y)$ and write $W(X,Y)= \sum_{i,j} W_{i,j} X^i Y^j$. Then for $0 \leq j \leq k \leq n$, the coefficient of $X^j Y^k$ in $K_t(X,Y)$ is
\[\sum_{\ell = \max(0,j-n+r)}^{\min(j,r)} \sum_{m = \max(\ell,k-n+r)}^{k+\ell-j} \binom{r}{\ell} \binom{m-1}{m-\ell} W_{j-\ell,k-m}. 
\]
\end{prop}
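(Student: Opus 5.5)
The plan is to decompose each lattice point of $Q_t$ into a part on the root vertex and parts on the sub-trees $t^{(1)},\ldots,t^{(w)}$, and then count each part separately. Write the label set as $[n]=R\sqcup D_1\sqcup\cdots\sqcup D_w$, where $R$ is the root vertex ($|R|=r$) and $D_k$ is the label set of $t^{(k)}$, so $\sum_k|D_k|=n-r$. Every vertex $v\neq$ root lies in exactly one sub-tree $t^{(k)}$, and its set $\mathcal{D}(v)$ is the same whether computed in $t$ or in $t^{(k)}$. Hence the defining inequalities of $Q_t$ are:
\begin{itemize}
\item the inequalities of $Q_{t^{(1)}},\ldots,Q_{t^{(w)}}$ on the respective blocks of coordinates;
\item $x_i\ge 0$ for $i\in R$;
\item the single root inequality $\sum_{i\in[n]}x_i\le n$.
\end{itemize}

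This gives a bijection between lattice points $a\in P_t$ and pairs $(b,c)$ such that:
\begin{itemize}
\item $b=(b^{(1)},\ldots,b^{(w)})$ with $b^{(k)}\in P_{t^{(k)}}$;
\item $c\in\mathbb{Z}_{\ge0}^{R}$;
\item $\mathrm{ht}(b)+|c|\le n$.
\end{itemize}
Both statistics are additive: $\mathrm{nz}(a)=\mathrm{nz}(b)+\mathrm{nz}(c)$ and $\mathrm{ht}(a)=\mathrm{ht}(b)+|c|$. Since the constraints on different blocks are independent, the generating function of the tuples $b$ by $(\mathrm{nz},\mathrm{ht})$ is exactly $W(X,Y)=\prod_k K_{t^{(k)}}(X,Y)$. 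Thus the number of admissible $b$ with $\mathrm{nz}(b)=j'$ and $\mathrm{ht}(b)=k'$ is $W_{j',k'}$.

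Next, count the root parts $c$ with $\ell$ nonzero entries and total sum $m$. We choose the support in $\binom{r}{\ell}$ ways, and then a composition of $m$ into $\ell$ positive parts in $\binom{m-1}{\ell-1}=\binom{m-1}{m-\ell}$ ways. The degenerate case $\ell=m=0$ contributes $1$, under the convention $\binom{-1}{0}=1$. Fixing $\mathrm{nz}(a)=j$ and $\mathrm{ht}(a)=k$ and summing over $(\ell,m)$ then gives
\[
\sum_{\ell,m}\binom{r}{\ell}\binom{m-1}{m-\ell}W_{j-\ell,k-m}.
\]
The root inequality is simply $k\le n$, which the statement already assumes. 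So it remains only to justify the summation limits.

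Justifying the limits is the only delicate step. I would check each bound against a structural fact, noting that terms outside the stated range vanish anyway:
\begin{itemize}
\item $0\le\ell\le\min(j,r)$ holds because $\ell$ counts nonzero coordinates among the $r$ root coordinates and $\ell\le\mathrm{nz}(a)=j$.
\item $\ell\ge j-n+r$ holds because $\mathrm{nz}(b)=j-\ell\le n-r$, the number of sub-tree coordinates.
\item $m\ge\ell$ holds because each of the $\ell$ positive parts is at least $1$.
\item $m\ge k-n+r$ holds because $\mathrm{ht}(b)=k-m\le\sum_k|D_k|=n-r$, using the sub-tree root inequalities.
\item $m\le k+\ell-j$ is equivalent to $k-m\ge j-\ell$, i.e.\ $\mathrm{ht}(b)\ge\mathrm{nz}(b)$, which holds for any vector of nonnegative integers.
\end{itemize}
Restricting to these ranges loses no nonzero term, which yields the stated formula.
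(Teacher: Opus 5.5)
Your proof is correct. The pieces all hold: the splitting $a=(b,c)$ into sub-tree blocks and root coordinates, the reduction of the root inequality to $k\le n$, and the count $\binom{r}{\ell}\binom{m-1}{m-\ell}$ of root parts. That count is right in the $\ell=0$ cases too, via $\binom{-1}{0}=1$ and $\binom{m-1}{m}=0$ for $m\ge 1$. Your check that every term outside the stated $(\ell,m)$-range has a vanishing factor is also sound.

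The paper gives no proof of this statement; it quotes it from Chapoton. Your argument is the natural one, and it is the same reasoning the paper uses, specialized to $t_n$, in its proof of Proposition~\ref{FormulaK_t_n}. The only blemish is the harmless reuse of $k$ both as the height and as the sub-tree index in $\sum_k|D_k|$.
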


Furthermore, Chapoton derived a transformation formula form $K_t(X,Y)$ to $M_t(X,Y)$.
\begin{prop}{\em \cite[Proposition 2.10]{Chapoton}}\label{Chap2.10}
For every arbor $t$, we have
$$M_t(X,Y)=K_t(1-1/X, XY).$$
\end{prop}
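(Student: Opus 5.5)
The plan is to compute $M_{P_t}(X,Y)$ directly from the definition, organizing the double sum by the upper element $b$. Two structural facts about $P_t$ make this work. The first is that $P_t$ is an order ideal (down-set) of $\mathbb{N}^n$ with the coordinatewise order. The second is that this forces every interval of $P_t$ to be a product of chains, whose Möbius function is classical.

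\emph{Step 1: $P_t$ is down-closed.} The defining inequalities of $Q_t$ other than $x_i\ge 0$ are of the form $\sum_{i\in\mathcal{D}(v)}x_i\le |\mathcal{D}(v)|$, which have nonnegative coefficients. So if $b\in P_t$ and $a\in\mathbb{Z}^n$ satisfies $0\le a\le b$ coordinatewise, then $a$ satisfies all these inequalities and $a\in P_t$. Consequently, for $a\le b$ in $P_t$, the interval $[a,b]$ in $P_t$ coincides with the box $\prod_{i=1}^n [a_i,b_i]$ in $\mathbb{Z}^n$, which is a product of chains.

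\emph{Step 2: Möbius function.} By the product theorem for Möbius functions, together with the fact that on a chain $\mu(p,q)$ equals $1$ if $q=p$, $-1$ if $q=p+1$ and $0$ otherwise, we get
\[
\mu_{P_t}(a,b)=\begin{cases}(-1)^{|S|} & \text{if } b-a=\mathbf 1_S \text{ for some } S\subseteq[n],\\ 0&\text{otherwise}.\end{cases}
\]
Here $\mathbf{1}_S$ is the indicator vector of $S$. Since $a=b-\mathbf 1_S\ge 0$, the set $S$ must lie in the support of $b$, which has $\mathrm{nz}(b)$ elements. Conversely, every such $S$ yields an element $a\in P_t$ by Step 1.

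\emph{Step 3: summation.} For fixed $b$ and $S\subseteq\mathrm{supp}(b)$ we have $\mathrm{ht}(a)=\mathrm{ht}(b)-|S|$. Summing over $S$ with the binomial theorem gives
\[
M_t(X,Y)=\sum_{b\in P_t}Y^{\mathrm{ht}(b)}\sum_{S\subseteq \mathrm{supp}(b)}(-1)^{|S|}X^{\mathrm{ht}(b)-|S|}
=\sum_{b\in P_t}(XY)^{\mathrm{ht}(b)}\Bigl(1-\tfrac1X\Bigr)^{\mathrm{nz}(b)}.
\]
The right-hand side is exactly $K_t(1-1/X,XY)$.

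The only point needing care is Step 1: the identification of intervals of $P_t$ with full boxes in $\mathbb{Z}^n$, which justifies applying the product formula for Möbius functions. Everything afterwards is routine bookkeeping.
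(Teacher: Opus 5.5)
Your proof is correct and complete. Down-closure of $P_t$ in $\mathbb{N}^n$ makes every interval $[a,b]$ the full box $\prod_i [a_i,b_i]$. Hence $\mu_{P_t}(a,b)$ equals $(-1)^{|S|}$ when $b-a=\mathbf{1}_S$ and $0$ otherwise. The binomial sum over $S\subseteq \mathrm{supp}(b)$ then gives exactly $(XY)^{\mathrm{ht}(b)}(1-1/X)^{\mathrm{nz}(b)}$, which is the $b$-term of $K_t(1-1/X,XY)$.

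The paper gives no proof of this identity. It quotes it from Chapoton and uses it as a black box in the proof of Theorem~\ref{Arbor-Theorem-2}, so there is no argument in the paper to compare yours against. Your write-up supplies a self-contained justification.

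It also shows that nothing specific to arbors is used. You need only that $P_t$ is a finite order ideal of $\mathbb{N}^n$ graded by coordinate sum, so the same identity holds for every such ideal.
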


In the particular case $t_{n}$ under consideration, we obtain a reduced formula.

\begin{prop}\label{FormulaK_t_n}
For any $n\in \mathbb{Z}_{+}$, the polynomial $K_{t_n}(X,Y)$ is given by
$$K_{t_n}(X,Y) = (1+XY)^n + \frac{XY^2}{1-Y} (1+XY)^{n-1} - \frac{XY^2}{1-Y} (Y(1+X))^{n-1}.$$
\end{prop}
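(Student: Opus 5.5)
The plan is to bypass Proposition~\ref{Chap2.8} and compute $K_{t_n}(X,Y)$ directly from the definition of $Q_{t_n}$, since its lattice points are very explicit. Label the root of $t_n$ by $1$ and the leaves by $2,\dots,n$. The defining inequalities of $Q_{t_n}$ are then
\begin{align*}
x_i\ge 0 \ (i\in[n]),\qquad x_i\le 1 \ (2\le i\le n),\qquad x_1+x_2+\cdots+x_n\le n .
\end{align*}
The leaf inequalities come from $\mathcal{D}(v)=\{i\}$, and the root inequality from $\mathcal{D}(\text{root})=[n]$. Hence a lattice point $a\in P_{t_n}$ is the same thing as a choice of a subset $S\subseteq\{2,\dots,n\}$ of leaves with $a_i=1$ (the other leaf coordinates being $0$), together with an integer root value $c=a_1$ with $0\le c\le n-|S|$.

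Next I would read off the weight $X^{\mathrm{nz}(a)}Y^{\mathrm{ht}(a)}$ of such a point. With $k=|S|$, the leaves contribute $(XY)^k$. The root contributes $1$ if $c=0$ and $XY^c$ if $c\ge1$. Grouping by $k$, I get
\begin{align*}
K_{t_n}(X,Y)=\sum_{k=0}^{n-1}\binom{n-1}{k}(XY)^k\Bigl(1+X\sum_{c=1}^{n-k}Y^c\Bigr)
=\sum_{k=0}^{n-1}\binom{n-1}{k}(XY)^k\Bigl(1+\frac{XY-XY^{n-k+1}}{1-Y}\Bigr).
\end{align*}

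I would then split this sum into two parts and apply the binomial theorem to each.

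The $k$-independent part gives $(1+XY)^{n-1}\bigl(1+\frac{XY}{1-Y}\bigr)$. Using $\frac{Y}{1-Y}=Y+\frac{Y^2}{1-Y}$, this equals
\begin{align*}
(1+XY)^{n}+\frac{XY^2}{1-Y}(1+XY)^{n-1}.
\end{align*}

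In the remaining part the factor $Y^k$ cancels against $Y^{-k}$ from $Y^{n-k+1}$. This leaves
\begin{align*}
-\frac{XY^{n+1}}{1-Y}\sum_{k}\binom{n-1}{k}X^k=-\frac{XY^{n+1}}{1-Y}(1+X)^{n-1}=-\frac{XY^2}{1-Y}\bigl(Y(1+X)\bigr)^{n-1},
\end{align*}
which is exactly the third term of the claimed formula.

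Finally I would check the edge case $n=1$ separately. There are no leaves, $c\in\{0,1\}$, and the formula gives $1+XY+\frac{XY^2}{1-Y}-\frac{XY^2}{1-Y}=1+XY$, consistent with the known value.

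I do not expect a real obstacle. The only delicate points are, first, correctly identifying the lattice points: the leaf constraints force $0/1$ values, and the root constraint couples $c$ to $|S|$. Second, one must treat $c=0$ separately, because it does not increase $\mathrm{nz}$. If a proof via Proposition~\ref{Chap2.8} were preferred instead, one would take $W(X,Y)=(1+XY)^{n-1}$ and $r=1$, and the double sum collapses to the same two cases $\ell=0$ and $\ell=1$. The direct count seems cleaner, so I would use it.
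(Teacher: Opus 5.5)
Your proof is correct and is essentially the paper's argument: a direct enumeration of the lattice points of $Q_{t_n}$ (leaf coordinates in $\{0,1\}$, root value bounded by $n$ minus the number of nonzero leaves), followed by a geometric sum and the binomial theorem. The only difference is bookkeeping. The paper groups by the exponent pair $(\mathrm{nz},\mathrm{ht})=(j,k)$, with cases $j=k$ and $j<k$, while you group by the leaf subset and the root value; your grouping gives full binomial sums directly, whereas the paper's sum over $j\le n-1$ matches the full binomials only because the omitted $j=n$ terms cancel, a step it leaves implicit.
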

\begin{proof}
For $0\le j\le k\le n$, we consider the coefficient of $X^{j}Y^{k}$ in $K_{t_{n}}(X,Y)$.

If $j=k$ which implies that the enumerated $a\in P_{t_{n}}$ possesses property $\mathrm{nz}(a)=\mathrm{ht}(a)$, so $a$ has exactly $j$ non-zero components, all of which are equal to $1$. One can easily see that there exist exactly $\binom{n}{j}$ such $a$. 

If $j<k$, it follows that the enumerated $a$ takes the value $k-j$ at the component of the root (which means $j\geq 1$), and among the remaining $n-1$ components, it possesses exactly $j-1$ non-zero entries, all of which are equal to $1$. There exist $\binom{n-1}{j-1}$ such $a$.

This yields the explicit expression for $K_{t_{n}}(X,Y)$,
\begin{align*}
 K_{t_n}(X,Y) &=\sum_{j=0}^{n}\binom{n}{j}X^{j}Y^{j}+\sum_{1\le j<k\le n}\binom{n-1}{j-1}X^{j}Y^{k}\\
 &= \sum_{j=0}^n \binom{n}{j} (XY)^j + \sum_{j=1}^{n-1} \binom{n-1}{j-1} X^j \sum_{k=j+1}^n Y^k \\
 &=(1+XY)^{n}+ \frac{XY^2}{1-Y}\sum_{j=1}^{n-1} \binom{n-1}{j-1} (XY)^{j-1} -\frac{XY^{n+1}}{1-Y}\sum_{j=1}^{n-1} \binom{n-1}{j-1} X^{j-1}\\
 &= (1+XY)^n + \frac{XY^2}{1-Y} (1+XY)^{n-1} - \frac{XY^2}{1-Y} (Y(1+X))^{n-1}.
\end{align*}
This completes the proof.
\end{proof}

\begin{thm}{\em (Conjectured by Chapoton)}\label{Arbor-Theorem-2}
The generating series for the M-triangles of $P_{t_n}$ is 
\[ 1 + \sum_{n \ge 1} M_{t_n}(X,Y) s^n = \frac{(XYs - Ys - 1)(XYs - 1)}{(2XYs - Ys - 1)(XYs - Ys + s - 1)}. \]
\end{thm}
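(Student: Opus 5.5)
The plan is to combine Proposition \ref{FormulaK_t_n} with Chapoton's transformation formula, Proposition \ref{Chap2.10}, to get $M_{t_n}(X,Y)$ in closed form, and then sum three geometric series in $s$. Substituting $X\mapsto 1-1/X$ and $Y\mapsto XY$ into the formula of Proposition \ref{FormulaK_t_n}, I first record the elementary simplifications
\[
1+(1-1/X)XY = 1+XY-Y,\qquad (1-1/X)(XY)^2 = XY^2(X-1),\qquad XY\bigl(1+1-1/X\bigr)=Y(2X-1).
\]
This gives
\[
M_{t_n}(X,Y) = (1+XY-Y)^n + \frac{XY^2(X-1)}{1-XY}\Bigl[(1+XY-Y)^{n-1} - (2XY-Y)^{n-1}\Bigr].
\]
This holds for all $n\ge 1$; at $n=1$ the bracket vanishes and $M_{t_1}=1+XY-Y$, which I will sanity-check directly against the two-element chain.

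Next I multiply by $s^n$ and sum over $n\ge1$. Write $A=1+XY-Y$ and $B=2XY-Y$. Then
\[
1+\sum_{n\ge1} A^n s^n = \frac{1}{1-As},
\]
and
\[
\sum_{n\ge1}\bigl(A^{n-1}-B^{n-1}\bigr)s^n = \frac{s}{1-As}-\frac{s}{1-Bs} = \frac{(A-B)s^2}{(1-As)(1-Bs)}.
\]
Here $A-B = 1-XY$, which cancels the denominator $1-XY$ in the coefficient. Hence the generating series equals
\[
\frac{1}{1-As} + \frac{XY^2(X-1)s^2}{(1-As)(1-Bs)} = \frac{1-Bs + XY^2(X-1)s^2}{(1-As)(1-Bs)}.
\]

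Finally I match this with the target. Its denominator is $(2XYs-Ys-1)(XYs-Ys+s-1) = (1-Bs)(1-As)$, since both factors carry a sign $-1$. For the numerator, expanding $(XYs-Ys-1)(XYs-1)$ gives
\[
X^2Y^2s^2 - XY^2s^2 - 2XYs + Ys + 1 = 1 - Bs + XY^2(X-1)s^2,
\]
which agrees. The only real risk is an algebra slip in the substitution step, especially the factor $(1-1/X)\cdot(XY)^2$ and the cancellation of $1-XY$, so I would double-check those and also verify the case $n=2$ numerically.
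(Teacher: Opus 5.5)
Your proposal is correct and follows essentially the same route as the paper: substitute $X\mapsto 1-1/X$, $Y\mapsto XY$ into the closed form of Proposition~\ref{FormulaK_t_n} via Proposition~\ref{Chap2.10}, then sum the resulting geometric series in $s$. Your use of $A-B=1-XY$ to cancel the denominator, and your expansion of the target numerator and denominator, supply the algebra that the paper leaves implicit in its final equality.
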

\begin{proof}
By Proposition \ref{FormulaK_t_n} and  Proposition \ref{Chap2.10}, we have
\begin{align*}
&M_{t_n}(X,Y)= K_{t_n}(1 - 1/X, XY)\\
=& (1+XY-Y)^n + \frac{X^{2}Y^{2}-XY^{2}}{1-XY} (1+XY-Y)^{n-1} - \frac{X^{2}Y^{2}-XY^{2}}{1-XY} (2XY-Y)^{n-1}.
\end{align*}
Defining the substituted variables 
$$A = 1+XY-Y,\quad B = 2XY-Y,\quad \text{and}\quad C = \frac{X^2Y^2-XY^2}{1-XY},$$
we get 
$$M_{t_n}(X,Y) = A^n + C A^{n-1} - C B^{n-1}.$$
Now, by computing the generating function $G(s) = 1 + \sum\limits_{n\geq1} M_{t_n}(X,Y) s^n$, we obtain
\begin{align*}
G(s) &= 1 + \sum_{n\geq1} (As)^n + Cs \sum_{n\geq1} A^{n-1}s^{n-1} - Cs \sum_{n\geq1} B^{n-1}s^{n-1} \\
&= 1 + \frac{As}{1-As} + \frac{Cs}{1-As} - \frac{Cs}{1-Bs} \\
&=\frac{(XYs - Ys - 1)(XYs - 1)}{(2XYs - Ys - 1)(XYs - Ys + s - 1)}.
\end{align*}
This completes the proof.
\end{proof}

\section{Generating Functions for Polytope Invariants}\label{Section-3-polytopes}

\subsection{Ehrhart Polynomials}

Let $\mathcal{P}$ be a $d$-dimensional lattice convex polytope in $\mathbb{R}^n$.  
The \emph{Ehrhart function}  
$$E_{\mathcal{P}}(u)=|u\mathcal{P}\cap \mathbb{Z}^n|,\quad u=1,2,\ldots,$$
counts the number of integer points in the $u$-th dilation $u\mathcal{P} = \{ u\alpha : \alpha \in \mathcal{P} \}$ of $\mathcal{P}$.

Ehrhart's famous theorem \cite{Ehrhart62} says that $E_{\mathcal{P}}(u)$ is a polynomial in $u$ of degree $d$, now known as the \emph{Ehrhart polynomial} of $\mathcal{P}$. 
It is well-known \cite[Corollary 3.20; Theorem 5.6]{BeckRobins} that the leading coefficient of $E_{\mathcal{P}}(u)$ equals the (relative) volume of $\mathcal{P}$, the second highest coefficient of $E_{\mathcal{P}}(u)$ equals half of the boundary volume of $\mathcal{P}$, and the constant term is $1$.

\begin{thm}{\em (Conjectured by Chapoton)}\label{Arbor-Theorem-3}
The generating series for the Ehrhart polynomials of $Q_{t_n}$ is
\[ 1 + \sum_{n \ge 1} E_{t_n}(u) s^n = \frac{1}{2} \left( 1 - \frac{s}{(us + s - 1)} - \frac{s - 1}{(us + s - 1)^2} \right). \]
\end{thm}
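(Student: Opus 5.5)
The plan is to compute $E_{t_n}(u)$ in closed form directly from the defining inequalities of $Q_{t_n}$, and then sum the resulting geometric-type series. Neither Proposition \ref{Chap2.7} nor the poset machinery is needed.

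\textbf{Step 1: the dilated polytope.} Label the root coordinate $x_1$ and the leaf coordinates $x_2,\ldots,x_n$. For $t_n$ the inequalities defining $uQ_{t_n}$ are:
\begin{itemize}
\item $x_i\ge 0$ for all $i$;
\item $x_i\le u$ for each leaf $i\ge 2$;
\item $x_1+x_2+\cdots+x_n\le un$ for the root.
\end{itemize}

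\textbf{Step 2: count by fixing the leaves.} Fix the leaf vector $(x_2,\ldots,x_n)\in\{0,\ldots,u\}^{n-1}$ and write $S=x_2+\cdots+x_n$. Since $S\le (n-1)u$, we have $un-S\ge u\ge 0$. So the root coordinate ranges freely over $0\le x_1\le un-S$, which gives exactly $un-S+1$ choices and never an empty range. Hence
\[
E_{t_n}(u)=\sum_{(x_2,\ldots,x_n)\in\{0,\ldots,u\}^{n-1}}\bigl(un+1-S\bigr).
\]
Summing over the box gives $(u+1)^{n-1}(un+1)$ minus the total of $S$. By symmetry that total is $(n-1)(u+1)^{n-2}\cdot\frac{u(u+1)}{2}$. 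Therefore
\[
E_{t_n}(u)=(u+1)^{n-1}\Bigl(\tfrac{(n+1)u}{2}+1\Bigr).
\]
As a check, $n=1$ gives $u+1$. The leading coefficient also gives the volume $n/2+\ldots$ consistently.

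\textbf{Step 3: sum the series.} Set $q=u+1$. Using
\[
\sum_{n\ge1}q^{n-1}s^n=\frac{s}{1-qs},\qquad \sum_{n\ge1}nq^{n-1}s^n=\frac{s}{(1-qs)^2},
\]
we obtain
\[
1+\sum_{n\ge1}E_{t_n}(u)s^n=1+\frac{u}{2}\Bigl(\frac{s}{(1-qs)^2}+\frac{s}{1-qs}\Bigr)+\frac{s}{1-qs}.
\]
It remains to show this equals $\frac12\bigl(1+\frac{s}{1-qs}-\frac{s-1}{(1-qs)^2}\bigr)$, noting that $us+s-1=qs-1$. Put everything over $2(1-qs)^2$ and compare numerators. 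This reduces to a polynomial identity in $s$ and $u$. As quick checks, the constant and $s^1$ coefficients of both sides agree ($1$ and $u+1$).

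\textbf{Main obstacle.} The only points needing care are:
\begin{itemize}
\item confirming that the root constraint never cuts off part of the leaf box, so that no case split is required;
\item the final rational-function simplification, which is routine bookkeeping.
\end{itemize}
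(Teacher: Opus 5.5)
Your proposal is correct and is essentially the paper's first proof. Fixing the leaf coordinates and counting the $un-S+1$ admissible root values is the same as the paper's $\sum_k (mn-k+1)w_k=(mn+1)F_{n-1,m}(1)-F'_{n-1,m}(1)$; your symmetry computation of $\sum S$ replaces the derivative of $(1+X+\cdots+X^m)^{n-1}$ at $X=1$, and both proofs then sum the same geometric-type series.

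The only step to tighten is Step 3, since matching the $s^0$ and $s^1$ coefficients does not establish the identity. Carrying out the comparison over $2(1-qs)^2$ gives a numerator difference of $(2u+2-2q)s+q(q-u-1)s^2$, which vanishes because $q=u+1$.
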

\begin{proof}[First proof of Theorem \ref{Arbor-Theorem-3}]
For every integer $m\geq 1$, Chapoton \cite{Chapoton} introduced the height distribution polynomial $F_{t,m}(X)$ that counts lattice points in the dilated polytope $mQ_{t}$ with weight $X^{h}$ at height $h$, which is multiplicative with respect to direct product of polytopes.

Fix an integer $m\geq 1$ and consider the dilated polytope $mQ_{t_{n}}$.

The arbor $t_n$ consists of a root of size 1 and $n-1$ leaves of size 1. For a single leaf, the height distribution polynomial is $1+X+\dots+X^m$. For $n-1$ independent leaves, their joint distribution is $F_{n-1,m}(X) = (1+X+\dots+X^m)^{n-1} = \sum\limits_{k=0}^{m(n-1)} w_k X^k$.

Obviously, the number of lattice points at height $j$ in $m Q_{t_n}$ is $\sum\limits_{k=0}^{\min(j, m(n-1))} w_k$. Summing over all possible heights $0 \le j \le mn$, the Ehrhart polynomial $E_{t_n}(u)$ evaluated at $m$ gives
\begin{align*}
E_{t_n}(m) &= \sum_{j=0}^{mn}\left(\sum_{k=0}^{\min(j, m(n-1))} w_k\right)=\sum_{k=0}^{m(n-1)} (mn - k + 1) w_k \\
&= (mn+1)\sum_{k} w_k - \sum_{k} k w_k \\
&= (mn+1)F_{n-1,m}(1) - F'_{n-1,m}(1) \\
&= (m+1)^{n-1}\left(\frac{mn}{2} + \frac{m}{2} + 1\right).
\end{align*}
Replacing $m$ with the continuous variable $u$, we construct the generating function $G(s, u) = 1 + \sum\limits_{n\geq1}E_{t_n}(u)s^n$. Then we have
\begin{align*}
G(s, u) &= 1 + \sum_{n\geq1}  (u+1)^{n-1}\left(\frac{un}{2} + \frac{u}{2} + 1\right)s^n\\
&= 1+\frac{\frac{u}{2}+1}{u+1}\sum_{n\geq1}\left((u+1)s\right)^n+\frac{u}{2(u+1)}\sum_{n\geq1}n((u+1)s)^{n}
\\
&=\frac{1}{2}\left(1-\frac{s}{us+s-1}-\frac{s-1}{(us+s-1)^2}\right).
\end{align*}
This completes the proof.
\end{proof}

\begin{proof}[Second proof of Theorem \ref{Arbor-Theorem-3}]
In \cite{Athanasiadis-Xiao-Yan}, the Ehrhart polynomial of the arbor polytope $t_n$ is given by
\begin{align*}
E_{t_n}(u)=\sum_{j=0}^{n-1}(-1)^j \binom{n-1}{j} \binom{(n-j)(u+1)}{n}.
\end{align*}
For the sake of completeness, we briefly outline the proof of this formula due to Athanasiadis et al. \cite{Athanasiadis-Xiao-Yan}.
Observe that $E_{t_n}(u)$ counts the lattice points $(x_1,x_2,\ldots,x_n)\in \mathbb{N}^n$ satisfying
$$x_1+x_2+\cdots +x_n\leq un, \quad\text{and}\quad 0\leq x_i\leq u \quad \text{for} \quad 1\leq i\leq n-1.$$
Since the number of points $(x_1,x_2,\ldots,x_n)\in \mathbb{N}^n$ which satisfy the first inequality and violate the second for any $j$ given values of $i\in [k]$ is equal to $\binom{(n-j)(u+1)}{n}$.
By the principle of inclusion–exclusion, we therefore obtain
\begin{align*}
E_{t_n}(u)=\sum_{j=0}^{n-1}(-1)^j \binom{n-1}{j} \binom{(n-j)(u+1)}{n},
\end{align*}
as desired.

Let $[x^m]f(x)$ be defined as the coefficient of $x^m$ in the polynomial $f(x)$.
Set $a=u+1$, we obtain
\begin{align*}
E_{t_n}(u)&=\sum_{j=0}^{n-1}(-1)^j \binom{n-1}{j} \binom{(n-j)a}{n}=\sum_{j=0}^{n-1}(-1)^j\binom{n-1}{j} [x^n] (1+x)^{a(n-j)}
\\ &=[x^n] (1+x)^{an} \sum_{j=0}^{n-1}\binom{n-1}{j}(-(1+x)^{-a})^j
\\ &=[x^n] (1+x)^{an}(1-(1+x)^{-a})^{n-1}=[x^n] (1+x)^a ((1+x)^a-1)^{n-1}
\\ &=[x^n] (1+x)^a x^{n-1} (g(x))^{n-1}=[x] (1+x)^a (g(x))^{n-1},
\end{align*}
where $g(x)=\frac{(1+x)^a-1}{x}$. Therefore, we have
\begin{align*}
1+\sum_{n=1}^{\infty} E_{t_n}(u) s^n=1+s \sum_{n=1}^\infty [x] (1+x)^a (s\cdot g(x))^{n-1}=1+s[x]\frac{(1+x)^a}{1-s\cdot g(x)}.
\end{align*}
Let $G(x)=\frac{(1+x)^a}{1-s\cdot g(x)}$ and $h(x)=1-s\cdot g(x)$. We have $[x]G(x)=G^{\prime}(0)$.
According to 
$$g(0)=a,\quad g^{\prime}(0)=\frac{a(a-1)}{2},\quad h(0)=1-as,\quad h^{\prime}(0)=-sg^{\prime}(0)=\frac{-sa(a-1)}{2},$$
we get
$$G^{\prime}(0)=\frac{a\cdot h(0)-h^{\prime}(0)}{(h(0))^2}=\frac{a-\frac{a(a+1)s}{2}}{(1-as)^2}.$$
Hence we conclude that
\begin{align*}
1+\sum_{n=1}^{\infty} E_{t_n}(u) s^n &=1+s\cdot G^{\prime}(0)=\frac{1-as+\frac{a(a-1)s^2}{2}}{(1-as)^2}
\\ &= \frac{1}{2}\left( 1-\frac{s}{as-1}-\frac{s-1}{(as-1)^2}\right)
\\ & =\frac{1}{2} \left( 1 - \frac{s}{(us + s - 1)} - \frac{s - 1}{(us + s - 1)^2} \right).
\end{align*}
This completes the proof.
\end{proof}

\subsection{Laplace Transform of Volume Functions}

For a given lattice polytope $\mathcal{P}$, we know that the leading coefficient of $E_{\mathcal{P}}(u)$ equals the volume of $\mathcal{P}$.
For an arbor $t$, Chapoton refine the volume of $Q_t$ into a volume function $V_t(h)$, that records the volume of every slice of $Q_t$ according to the height function $\mathrm{ht}$. This function vanishes outside the segment $[0,n]$, where $n$ is the size of $t$.
For example, for the arbor $t$ on one element, $V_t(h)=1$ for $0\leq h\leq 1$ and $0$ otherwise.

The \emph{Laplace transform} of volume function $V_t(h)$ is defined by 
\begin{align*}
L_t(v)=\int_0^{\infty} \mathrm{exp}(-vh) V_t(h) dh.
\end{align*}
Note that $L_t(0)$ is the volume of $Q_t$.

The operator $\mathbf{T}_n$ acts on a Laplace transform function as follows:
$$\mathbf{T}_{n}\left(\int_0^{\infty} \mathrm{exp}(-vh) f(h) dh\right):=\int_0^{n} \mathrm{exp}(-vh) f(h) dh.$$
Chapoton \cite[Section 2.3]{Chapoton} introduced a bivariate polynomial $L_t(E,V)$ for computing the Laplace transform $L_t(v)$ where $E=e^{-v}$ and $V=1/v$.
For a given arbor $t$, one can compute $L_t(v)$ by first computing polynomial $L_t(E, V)$, and then replace $E$ by $e^{-v}$ and $V$ by $1/v$. Chapoton also derived a formula which tranlates the action of $\mathbf{T}_n$ on polynomials in $E$ and $V$.

\begin{prop}{\em \cite[Lemma 2.5]{Chapoton}}\label{OperatorT_n}
If $l\geq n$, then $\mathbf{T}_{n}(V^{k}E^{l})=0$. Othewise
$$\mathbf{T}_{n}(V^{k+1}E^{l})=V^{k+1}E^{l}-\sum_{j=0}^{k}\frac{(n-l)^{k-j}}{(k-j)!}V^{j+1}E^{n}.$$
\end{prop}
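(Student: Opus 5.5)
The plan is to read the symbols $V^{k+1}E^{l}$ as Laplace transforms of explicit functions of $h$, apply $\mathbf{T}_n$ directly to those functions, and then translate the truncated integral back into monomials in $E$ and $V$. Throughout, $E=e^{-v}$ and $V=1/v$, and every monomial $V^{k+1}E^l$ has $k\ge 0$ and integer $l\ge 0$.

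\textbf{Dictionary.} For $k\ge 0$ and $l\ge 0$, set $f_{k,l}(h)=\frac{(h-l)^k}{k!}$ for $h\ge l$ and $f_{k,l}(h)=0$ otherwise. The substitution $h=l+t$ together with $\int_0^\infty e^{-vt}t^k\,dt=k!/v^{k+1}$ gives
\[
\int_0^\infty e^{-vh} f_{k,l}(h)\,dh=e^{-vl}\,v^{-(k+1)}=V^{k+1}E^{l}.
\]
So $V^{k+1}E^l$ is the Laplace transform of $f_{k,l}$. This is the correspondence implicit in Chapoton's definition of $L_t(E,V)$, and I would state it explicitly first. Since $\mathbf{T}_n$ acts linearly on Laplace transforms, this determines $\mathbf{T}_n$ on every monomial.

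\textbf{Case $l\ge n$.} Here $f_{k,l}$ vanishes on $[0,n)$, so $\int_0^n e^{-vh}f_{k,l}(h)\,dh=0$. Hence $\mathbf{T}_n(V^{k+1}E^l)=0$. This is the first assertion, read for monomials with a positive power of $V$, which are the only ones arising from Laplace transforms of genuine functions.

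\textbf{Case $l<n$.} Write
\[
\int_0^n e^{-vh}f_{k,l}(h)\,dh=V^{k+1}E^l-\int_n^\infty e^{-vh}\frac{(h-l)^k}{k!}\,dh .
\]
Expand $(h-l)^k=((h-n)+(n-l))^k$ with the binomial theorem, which gives
\[
\frac{(h-l)^k}{k!}=\sum_{j=0}^k \frac{(n-l)^{k-j}}{(k-j)!}\cdot\frac{(h-n)^j}{j!}.
\]
By the dictionary with $l$ replaced by $n$, each term satisfies $\int_n^\infty e^{-vh}\frac{(h-n)^j}{j!}\,dh=V^{j+1}E^n$. Substituting these yields exactly the stated formula
\[
\mathbf{T}_{n}(V^{k+1}E^{l})=V^{k+1}E^{l}-\sum_{j=0}^{k}\frac{(n-l)^{k-j}}{(k-j)!}V^{j+1}E^{n}.
\]

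\textbf{Main obstacle.} The calculations are routine; the only delicate point is justifying that $\mathbf{T}_n$ is well defined on formal monomials. Distinct functions must not share a Laplace transform, so that translating back to $f_{k,l}$ is unambiguous. Injectivity of the Laplace transform on piecewise polynomial functions of exponential type gives this. The other point is to be explicit that the convention $k\ge 0$ is built into the notation $V^{k+1}$.
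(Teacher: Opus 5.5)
Your proof is correct. The paper does not prove this statement itself; it quotes it from Chapoton's Lemma 2.5. Your argument therefore supplies a self-contained verification. You identify $V^{k+1}E^{l}$ with the Laplace transform of $(h-l)_+^{k}/k!$, subtract the tail on $[n,\infty)$, and expand $(h-l)^k$ in powers of $h-n$; this is the natural computation behind the lemma.

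Two points are handled correctly in your argument but are worth stating explicitly:

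\begin{enumerate}
\item The hypothesis $l<n$ is exactly what guarantees $f_{k,l}(h)=(h-l)^k/k!$ on all of $[n,\infty)$. For $l>n$ the tail formula would return $-\int_n^l e^{-vh}(h-l)^k/k!\,dh\neq 0$, which is wrong. That is why the first case has to be treated separately. At $l=n$ the two cases agree, because the sum collapses to $V^{k+1}E^{n}$.
\item Restricting to monomials of positive $V$-degree is harmless. Every monomial produced from $L_{t_1}=V-VE$ and $\mathbf{T}_n(V^r)$ has this form.
\end{enumerate}
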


The initial value $L_t(E, V)$ for the arbor of size 1 is $-VE + V$.
Let $t$ be an arbor on the set $[n]$ with $n \geq 2$. 
Let $(t^{(1)}, \ldots, t^{(w)})$ be the sub-trees of the root vertex of $t$. 
Let $r$ be the size of the root vertex of $t$.
The polynomial $L_t(E, V)$ is 
\begin{align}\label{Proposition-2.6}
\mathbf{T}_n \left( \prod_{k=1}^w L_{t^{(k)}}(E, V) \cdot \mathbf{T}_n(V^r) \right).
\end{align}

\begin{thm}{\em (Conjectured by Chapoton)}\label{Arbor-Theorem-4}
The generating series for the Laplace transforms of volume functions of $Q_{t_n}$ is
\[ \sum_{n \ge 1} L_{t_n}(E,V) s^n = V \left( \frac{s}{EVs - Vs + 1} + \frac{Es}{Es - 1} \right). \]
\end{thm}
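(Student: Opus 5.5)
The plan is to compute $L_{t_n}(E,V)$ in closed form for each $n$ using the recursion \eqref{Proposition-2.6} together with Proposition \ref{OperatorT_n}, and then sum the geometric series. First I will expand the right-hand side of Theorem \ref{Arbor-Theorem-4} as a power series in $s$:
\[
\frac{Vs}{1-Vs(1-E)}=\sum_{n\ge1}V^n(1-E)^{n-1}s^n,\qquad \frac{VEs}{Es-1}=-\sum_{n\ge1}VE^ns^n .
\]
So the theorem is equivalent to the statement that, for every $n\ge 1$,
\[
L_{t_n}(E,V)=V^n(1-E)^{n-1}-VE^n .
\]

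For $n=1$ this is the initial value $V-VE$. For $n\ge2$ the root has size $r=1$ and there are $w=n-1$ leaves, each with $L=V(1-E)$. By Proposition \ref{OperatorT_n} with $k=0$, $l=0$ we get $\mathbf{T}_n(V)=V-VE^n$. Hence
\[
L_{t_n}=\mathbf{T}_n\big(V^n(1-E)^{n-1}(1-E^n)\big).
\]
Every monomial that carries the factor $E^n$ has $E$-exponent at least $n$, so it is killed by $\mathbf{T}_n$. We are left with
\[
L_{t_n}=\sum_{l=0}^{n-1}(-1)^l\binom{n-1}{l}\,\mathbf{T}_n(V^nE^l),
\]
where all exponents satisfy $l<n$. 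Applying Proposition \ref{OperatorT_n} with $k=n-1$ to each term gives
\[
L_{t_n}=V^n(1-E)^{n-1}-\sum_{j=0}^{n-1}\frac{V^{j+1}E^n}{(n-1-j)!}\sum_{l=0}^{n-1}(-1)^l\binom{n-1}{l}(n-l)^{\,n-1-j}.
\]

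The main step is to evaluate the inner alternating sum. Substitute $x=n-l$, so that $x$ runs over $1,\dots,n$. The inner sum is then, up to the sign $(-1)^{n-1}$ conventions, the $(n-1)$-st finite difference of the polynomial $x^{p}$ with $p=n-1-j$. When $p<n-1$, i.e.\ $j\ge1$, this difference vanishes because the degree is too small. When $p=n-1$, i.e.\ $j=0$, it equals $(n-1)!$.

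I would check the sign carefully by writing the sum as $\sum_i(-1)^{n-1-i}\binom{n-1}{i}(1+i)^{p}$ with $i=n-1-l$, which is exactly $\Delta^{n-1}x^p\big|_{x=1}$. This confirms the value $+(n-1)!$ for $j=0$. Consequently only the $j=0$ term survives, and it contributes $\frac{(n-1)!}{(n-1)!}VE^n=VE^n$. This yields $L_{t_n}=V^n(1-E)^{n-1}-VE^n$, and summing over $n$ as in the first paragraph gives the stated generating series.

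I expect this finite-difference evaluation, and in particular getting its sign and normalization right, to be the only delicate point; the rest is bookkeeping of which exponents $\mathbf{T}_n$ annihilates.
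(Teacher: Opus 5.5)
Your proposal is correct and follows the paper's proof essentially step for step. You use the recursion with $\mathbf{T}_n(V)=V-VE^n$, discard the part killed by $\mathbf{T}_n$, evaluate the inner alternating sum as the $(n-1)$-st forward difference of $x^{n-1-j}$ at $x=1$ to get $L_{t_n}=V^n(1-E)^{n-1}-VE^n$, and then sum geometric series. Your explicit reindexing $i=n-1-l$ to fix the sign and your separate check of $n=1$ are minor clarifications that the paper leaves implicit.
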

\begin{proof}
By Equation \eqref{Proposition-2.6}, the Laplace transform of the volume function follows the recursion
$$L_{t_n}(E,V) = \mathbf{T}_n \left( L_{t_1}^{n-1} \cdot \mathbf{T}_n(V) \right),$$
where $L_{t_1}(E,V) = V(1-E)$. According to Proposition \ref{OperatorT_n}, $\mathbf{T}_n(V) = V - VE^n$. Substituting this into the recursion gives
$$L_{t_n}(E,V) = \mathbf{T}_n \left( V^n(1-E)^{n-1} - V^n(1-E)^{n-1}E^n \right).$$
The truncation operator $\mathbf{T}_n$ maps $V^k E^l \mapsto 0$ for $l \ge n$. Since the expanded terms of $V^n(1-E)^{n-1}E^n$ all possess an $E$ degree of at least $n$, $\mathbf{T}_n$ annihilates this entire section. Thus, we only need to evaluate $\mathbf{T}_n \left( V^n(1-E)^{n-1} \right)$.
Using the binomial expansion and Proposition \ref{OperatorT_n}, we obtain
\begin{align*}
L_{t_n}(E,V) &= \mathbf{T}_n \left( \sum_{l=0}^{n-1} \binom{n-1}{l} (-1)^l V^n E^l \right) \\
&=\sum_{l=0}^{n-1} \binom{n-1}{l} (-1)^l\left(V^{n}E^{l}-\sum_{j=0}^{n-1}\frac{(n-l)^{n-1-j}}{(n-1-j)!}V^{j+1}E^{n}\right)\\
&= V^n(1-E)^{n-1} - E^n \sum_{j=0}^{n-1} V^{j+1} \left[ \sum_{l=0}^{n-1} \binom{n-1}{l} (-1)^l \frac{(n-l)^{n-1-j}}{(n-1-j)!} \right].
\end{align*}
The inner bracketed sum corresponds to the $(n-1)$-th order forward difference of the polynomial $P(x) = \frac{x^{n-1-j}}{(n-1-j)!}$ evaluated at $x=1$, denoted $\Delta^{n-1}P(1)$. Since the degree of $P(x)$ is $n-1-j$, the difference $\Delta^{n-1}P(1)$ is strictly $0$ for $j > 0$. When $j=0$, the degree is exactly $n-1$, and $\Delta^{n-1}P(1) = 1$. Consequently, the double sum collapses to a single term
$$L_{t_n}(E,V) = V^n(1-E)^{n-1} - VE^n.$$
Generating the formal power series over $n$:
\begin{align*}
G(s) &= \sum_{n\geq 1} \left( V^n(1-E)^{n-1} - VE^n \right) s^n \\
&= Vs \sum_{n\geq1} (Vs(1-E))^{n-1} - V \sum_{n\geq1} (Es)^n \\
&= V \left( \frac{s}{EVs - Vs + 1} + \frac{Es}{Es - 1} \right).
\end{align*}
This completes the proof.
\end{proof}

%\noindent\textbf{Ethical approval:} Not applicable.

%\noindent\textbf{Conflict interest:} None.

%\noindent\textbf{Declaration of competing interest:} The authors declared that they had no conflicts of interest with respect to %their authorship or the publication of this article.

%\noindent\textbf{Data availability:} Data availability is not applicable to this article as no new data were created or analyzed in %this study.

%\noindent\textbf{Funding:} This work was partially supported by the National Natural Science Foundation of China***********.

\noindent
{\small \textbf{Acknowledgments:}}
%The authors would like to thank the anonymous referees for valuable suggestions for improving the presentation.
The authors are greatly indebted to Professor Guoce Xin for the guidance over the past years. 
This work is partially supported by the National Natural Science Foundation of China [12571355].

\end{document}